\documentclass[12pt]{article}
\usepackage{mystyle}
\begin{document}
\title{The seed order}
\author{Gabriel Goldberg}
\maketitle
\begin{abstract}
We study various orders on countably complete ultrafilters on ordinals that coincide and are wellorders under a hypothesis called the Ultrapower Axiom. Our main focus is on the relationship between the Ultrapower Axiom and the linearity of these orders.
\end{abstract}
\section{Introduction}
In this paper, we study several related orders on countably complete ultrafilters that all coincide under a set theoretic hypothesis called the {\it Ultrapower Axiom} (UA). The first, called the seed order and denoted \(\swo\), was introduced in \cite{UA}, where the following theorem was established:
\begin{thm}[UA]
The seed order is a wellorder of the class of countably complete uniform ultrafilters on ordinals.
\end{thm}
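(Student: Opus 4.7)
The plan is to verify, in sequence, that under UA the relation $\swo$ is (i) well-defined on countably complete uniform ultrafilters on ordinals, (ii) irreflexive and transitive, (iii) linear, and (iv) wellfounded. The key tool is that UA guarantees, for any such $U$ and $W$, an \emph{internal ultrapower comparison}: internal ultrapower embeddings $i:M_U\to N$ and $k:M_W\to N$ satisfying $i\circ j_U = k\circ j_W$. Writing $a_U = [\mathrm{id}]_U$ for the seed of $U$, the intended reading is $U\swo W$ iff in some (equivalently, every) such comparison, $i(a_U) < k(a_W)$. Uniformity is used so that $a_U$ is an ordinal and distinct uniform ultrafilters on ordinals are distinguished by their seeds.

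First I would establish well-definedness. Given two rival comparisons of $j_U$ and $j_W$ landing in $N_0$ and $N_1$, UA applied to the resulting embeddings $V\to N_0$ and $V\to N_1$ amalgamates them into a further model $N$; trichotomy on ordinals in $N$ forces the two verdicts on $i(a_U)$ vs.\ $k(a_W)$ to coincide. Irreflexivity then follows by taking the trivial self-comparison, and transitivity follows by comparing three ultrapowers in a common target and using that internal ultrapower embeddings preserve ordinal inequalities.

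For linearity, I would take any comparison $(i,k):(M_U,M_W)\to N$; the three cases of ordinal trichotomy between $i(a_U)$ and $k(a_W)$ give $U\swo W$, $W\swo U$, or equality of the seed images. In the equality case I would argue $U = W$ as ultrafilters: since $a_U$ generates $M_U$ over $j_U[V]$ by $A\in U \iff a_U\in j_U(A)$, and similarly for $W$, the identities $i\circ j_U = k\circ j_W$ and $i(a_U)=k(a_W)$ force $U$ and $W$ to define the same ultrafilter on the ordinals.

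The main obstacle is wellfoundedness. Suppose toward contradiction that $\cdots\swo U_2\swo U_1\swo U_0$ is a countable $\swo$-descending chain. I would iteratively apply UA to build a directed system of internal ultrapower embeddings $\iota_n:M_{U_n}\to N$ into a single target $N$, with all the compositions $\iota_n\circ j_{U_n}$ equal as maps $V\to N$ and $\iota_n(a_{U_n}) > \iota_m(a_{U_m})$ whenever $n<m$. The images of the seeds then form a descending $\omega$-sequence of ordinals in $N$, contradicting its wellfoundedness; $N$ is wellfounded because each $M_{U_n}$ is (countable completeness of $U_n$) and wellfoundedness propagates through the countable internal ultrapower iteration producing $N$. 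The step I expect to be hardest is this amalgamation: UA delivers only binary comparisons, so coordinating an $\omega$-chain of them into a single coherent directed system whose colimit is still wellfounded is the delicate move, and it is the place where countable completeness of the $U_n$ must be put to essential use.
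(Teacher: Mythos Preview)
The paper does not prove this theorem in-text; it is cited from \cite{UA}. The natural in-paper comparison point is the proof of wellfoundedness for the Ketonen order (\cref{Wellfoundedness}), which under UA coincides with the seed order.

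Your well-definedness step has a real gap. After amalgamating two comparisons into a common target via $h_0:N_0\to N$ and $h_1:N_1\to N$, you invoke ``trichotomy on ordinals in $N$,'' but trichotomy alone does not force the two verdicts to agree: you need $h_0\circ i_0(a_U)=h_1\circ i_1(a_U)$ and $h_0\circ k_0(a_W)=h_1\circ k_1(a_W)$. The maps $h_0\circ i_0$ and $h_1\circ i_1$ from $M_U$ to $N$ agree on $j_U[V]$, but $a_U\notin j_U[V]$, so equality at $a_U$ is not automatic. What actually closes this gap is \cref{MinDefEmb}: any two parameter-definable elementary embeddings between the same inner models agree on ordinals. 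The same missing ingredient recurs in your irreflexivity claim (the trivial self-comparison only shows that \emph{one} comparison fails to witness $U\swo U$, whereas the definition is existential) and in coordinating the seeds when you chain comparisons for transitivity and for your $\omega$-amalgamation.

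For wellfoundedness, the paper's route is quite different from yours. Rather than building a direct limit, \cref{Wellfoundedness} argues by minimal counterexample: take the least $\delta$ carrying some $U_0$ with an infinite descending chain below it, then use \cref{Resemblance} to reflect the chain into $M_{U_0}$ as $U_1^*\sgE^{M_{U_0}} U_2^*\sgE^{M_{U_0}}\cdots$ with $\textsc{sp}(U_1^*)\le[\text{id}]_{U_0}<j_{U_0}(\delta)$, contradicting elementarity of $j_{U_0}$ (absoluteness of wellfoundedness between $V$ and $M_{U_0}$ needs only the countable completeness of $U_0$). This uses a single ultrapower, avoids direct limits entirely, and does not use UA at all. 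Your strategy of pushing all seeds into one wellfounded $N$ is workable in principle---length-$\omega$ linear iterations of $V$ by internally countably complete ultrafilters do have wellfounded direct limits---but you have not supplied that argument, and it is precisely the step you flagged as hardest.
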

Here we will show quite easily that in ZFC one cannot even establish the transitivity of \(\swo\), since the transitivity of \(\swo\) is equivalent to UA. We instead introduce another very natural partial order called the {\it Ketonen order} and denoted \(\sE\), a special case of which was briefly considered in \cite{Ketonen}. We then show that the Ketonen order yields the appropriate definition of the seed order in the context of ZFC:
\begin{thm}
The Ketonen seed order is a strict wellfounded partial order on the class of countably complete uniform ultrafilters on ordinals.
\end{thm}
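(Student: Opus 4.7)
The plan is to verify, in turn, irreflexivity, wellfoundedness, and transitivity of $\sE$ on the class of countably complete uniform ultrafilters on ordinals. I assume the natural ZFC-robust definition of the Ketonen order: for such $U$ on $\alpha$ and $W$ on $\beta$, $U \sE W$ iff there is a sequence $\langle Z_\xi : \xi < \beta\rangle$ of countably complete ultrafilters with each $Z_\xi$ concentrating on an ordinal strictly below $\xi$ such that $U$ is the $W$-limit of this sequence. Preliminary bookkeeping, which I would do first, shows that $W$-limits of countably complete ultrafilters remain countably complete and that the underlying ordinal of the limit is suitably controlled by $\beta$; these facts anchor the inductions below.

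The most substantive target is \emph{wellfoundedness}. The approach is to define a rank $\rho(U)$ on ultrafilters by the intended recursion and show $U \sE W$ implies $\rho(U) < \rho(W)$, but to make this noncircular one needs the substantive fact that there is no infinite descending chain $\cdots \sE U_2 \sE U_1 \sE U_0$. Given such a chain, each step $U_{n+1} \sE U_n$ is witnessed by a sequence $\langle Z^n_\xi : \xi < \alpha_n\rangle$, and by taking a representing function for $U_{n+1}$ inside $\mathrm{Ult}(V, U_n)$ and iterating, one extracts a strictly descending sequence of ordinals, contradicting wellfoundedness of $\mathrm{Ord}$. This is the main obstacle of the proof and is essentially Ketonen's original descent argument, recast in the setting of countably complete uniform ultrafilters on arbitrary ordinals.

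\emph{Transitivity} is then proved by a Fubini-style interchange of iterated ultrafilter limits. Given $U \sE V$ witnessed by $\langle Y_\eta : \eta < \gamma\rangle$ and $V \sE W$ witnessed by $\langle Z_\xi : \xi < \beta\rangle$, I would define, for each $\xi < \beta$, a new ultrafilter $Y^\xi$ as the $Z_\xi$-limit of the (suitably pulled-back) family $\langle Y_\eta\rangle$, check that each $Y^\xi$ is countably complete and concentrates below $\xi$, and then verify that the $W$-limit of $\langle Y^\xi : \xi < \beta\rangle$ is $U$. The key identity is the commutativity of double countably complete ultrafilter limits, which I would state and prove as a lemma. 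Finally, \emph{irreflexivity} is automatic: if $U \sE U$ then $U, U, U, \ldots$ is an infinite descending chain, contradicting wellfoundedness. Together these three facts give the desired conclusion that $\sE$ is a strict wellfounded partial order.
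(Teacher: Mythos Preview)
Your wellfoundedness sketch hides the main difficulty. You write that from a descending chain $U_0 \sgE U_1 \sgE \cdots$ one can ``take a representing function for $U_{n+1}$ inside $\mathrm{Ult}(V,U_n)$ and iterate'' to produce a descending sequence of ordinals. But the witness $U_{n+1}^*$ for $U_{n+1}\sE U_n$ lives in $M_{U_n}$, while the witness for $U_{n+2}\sE U_{n+1}$ lives in $M_{U_{n+1}}$, which is a \emph{different} model. The seeds you obtain are therefore ordinals of different structures and cannot be directly compared; nor is it clear how to continue the iteration, since to take a further ultrapower of $(M_{U_n})_{U_{n+1}^*}$ you need an ultrafilter there, not one in $M_{U_{n+1}}$. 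The paper isolates exactly this step as a separate lemma (the Resemblance Lemma): if $U\sE W$ and $W_*\in M_Z$ satisfies $Z^-(W_*)=W$, then there is $U_*\sE^{M_Z}W_*$ with $Z^-(U_*)=U$. This is proved by pushing the $M_W$-witness forward along the factor embedding $k:M_W\to (M_Z)_{W_*}$. With this in hand, the paper lifts the entire descending chain into the single model $M_{U_0}$, obtaining a $\sE^{M_{U_0}}$-descending sequence whose first term has space at most $[\mathrm{id}]_{U_0}<j_{U_0}(\delta)$, contradicting the elementarity of $j_{U_0}$ applied to the minimality of $\delta$. Your appeal to ``Ketonen's original descent argument'' does not cover this: the paper notes that Ketonen handled only the weakly normal case, and the copying step is precisely what is new.

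For transitivity you propose a combinatorial Fubini argument, whereas the paper passes through the semicomparison characterization and composes embeddings (applying $i_1'$ to the first semicomparison and checking that $(i_1'\circ i_0,\ i_1'(i_1)\circ i_2')$ is again a semicomparison witnessing $U_0\sE U_2$). Your route can be made to work but needs care with the index sets: the family $\langle Y_\eta\rangle$ is indexed by $\eta<\textsc{sp}(V)$ while each $Z_\xi$ lives on $\xi<\textsc{sp}(W)$, and you must say what the ``suitably pulled-back'' family is when these do not match. The embedding formulation avoids this bookkeeping entirely. Irreflexivity from wellfoundedness is fine.
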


The Ketonen order extends the usual seed order, so we can conclude (as a theorem of ZFC) that the usual seed order is a strict wellfounded relation. Moreover, under UA the Ketonen order and the seed order are equal and both are wellorders. 

The main theorem of this paper concerns the linearity of the Ketonen order:
\begin{thm}\label{Linearity}
The following are equivalent:
\begin{enumerate}[(1)]
\item The Ketonen order is linear.
\item The Ultrapower Axiom holds.
\end{enumerate}
\end{thm}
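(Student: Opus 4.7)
The direction $(2)\Rightarrow(1)$ is essentially free: the introduction already asserts that under UA the Ketonen order coincides with the seed order, and the theorem of \cite{UA} recalled above states that the seed order is a wellorder under UA. Since wellorders are linear, (1) follows.

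For the converse $(1)\Rightarrow(2)$, suppose the Ketonen order is linear. Given countably complete ultrafilters $U$ and $W$, replace each by its canonical uniform representative on an ordinal, so that $U$ is uniform on $\alpha$ and $W$ is uniform on $\beta$. By linearity of $\sE$, one of $U\sE W$, $U = W$, or $W\sE U$ holds; the last two cases are trivial or symmetric, so assume $U\sE W$. The plan is to show that the Ketonen witness unpacks into precisely the internal data required by UA: an internal $M_W$-ultrafilter $\widetilde U$ concentrating on an ordinal below $[\mathrm{id}]_W$ whose $M_W$-ultrapower is $M_U$, yielding a factor embedding $k\colon M_W \to M_U$ with $k\circ j_W = j_U$. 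Setting $N = M_U$, $j = k$, and $i = \mathrm{id}_{M_U}$ then gives a commuting square witnessing UA for the pair $U,W$.

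The heart of the argument is the translation from external to internal. The expected definition of $\sE$ is via $W$-indexed sequences $\vec U = \langle U_\xi : \xi<\beta\rangle$ of lower countably complete ultrafilters with $U_\xi$ concentrating on an ordinal $<\xi$ and with $U$ their Rudin--Keisler limit along $W$. Under this interpretation, $\widetilde U = [\vec U]_W$ computed in $M_W$ is the desired internal ultrafilter, and the identification $\mathrm{Ult}(M_W,\widetilde U) = M_U$ together with $k\circ j_W = j_U$ reduces to a direct Rudin--Keisler computation.

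The main technical obstacle will be to verify this identification holds on the nose rather than merely up to isomorphism, since UA demands strict commutation of the resulting diagram of internal ultrapower embeddings. I expect the uniformity conditions built into the definition of $\sE$, together with the canonical choice of representative provided by the strict wellfoundedness result of Theorem 2, to pin down the equality $\mathrm{Ult}(M_W,\widetilde U) = M_U$ rather than just a natural isomorphism. Everything else in the argument should be a routine factoring-through calculation once this normalization is in hand.
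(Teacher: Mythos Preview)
Your treatment of $(2)\Rightarrow(1)$ is fine. The plan for $(1)\Rightarrow(2)$, however, has a real gap.

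From $U\sE W$ you correctly extract an internal $M_W$-ultrafilter $\widetilde U$ with $W^-(\widetilde U)=U$ and $\textsc{sp}(\widetilde U)\le[\mathrm{id}]_W$. But the identification $\mathrm{Ult}(M_W,\widetilde U)=M_U$ is not a normalization issue to be cleaned up; it is false in general, and your factor map points the wrong way. By \cref{LimitEmbedding} one gets an elementary $k:M_U\to M^{M_W}_{\widetilde U}$ with $k\circ j_U=j^{M_W}_{\widetilde U}\circ j_W$ and $k([\mathrm{id}]_U)=[\mathrm{id}]^{M_W}_{\widetilde U}$. Since $M^{M_W}_{\widetilde U}$ is generated over $j^{M_W}_{\widetilde U}[M_W]$, not merely over $j^{M_W}_{\widetilde U}\circ j_W[V]$, by the single seed $[\mathrm{id}]^{M_W}_{\widetilde U}$, this $k$ is typically not surjective. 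What you have produced is exactly a \emph{semicomparison} $(k,j^{M_W}_{\widetilde U}):(M_U,M_W)\to M^{M_W}_{\widetilde U}$ in the sense of \cref{SeedComparison}: the $M_W$-leg is an internal ultrapower embedding, but there is no reason for $k$ to be definable over $M_U$, let alone an internal ultrapower embedding of $M_U$. Wellfoundedness of $\sE$ does nothing to repair this.

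The paper's argument is genuinely different and uses linearity symmetrically. One defines translation functions on \emph{both} sides: $\tr U W$ is the $\sE^{M_U}$-least $W_*\in\Un^{M_U}$ with $U^-(W_*)=W$, and symmetrically $\tr W U$. The Reciprocity Theorem (\cref{Reciprocity}) then shows, from linearity of $\sE$ alone, that $U\oplus\tr U W=W\oplus\tr W U$; its proof combines the $\sE$-minimality of the translations with the fact (\cref{MinDefEmb}) that a definable embedding is pointwise least on ordinals. This equality gives $M^{M_U}_{\tr U W}=M^{M_W}_{\tr W U}$ and $j^{M_U}_{\tr U W}\circ j_U=j^{M_W}_{\tr W U}\circ j_W$, so $(j^{M_U}_{\tr U W},j^{M_W}_{\tr W U})$ is an honest comparison of $(j_U,j_W)$ with both legs internal by construction. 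The idea your plan is missing is precisely this two-sided use of the Ketonen-minimal witnesses.
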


The last order we consider is the Lipschitz order on countably complete ultrafilters, which is a direct generalization of the Lipschitz order on subsets of \({}^{\omega}2\).
\begin{thm}[UA]
The Lipschitz order coincides with the seed order on countably complete uniform ultrafilters on an ordinal \(\delta\).
\end{thm}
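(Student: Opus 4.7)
The plan is to prove the two containments $\leq_L \subseteq \swo$ and $\swo \subseteq \leq_L$ on countably complete uniform ultrafilters on $\delta$ separately, with UA entering essentially only in the second. Throughout I will work with the internal-comparison description of the seed order: $U \swo V$ iff there exist countably complete ultrafilters $W_0$ on $M_U$ and $W_1$ on $M_V$ such that $j_{W_0} \circ j_U = j_{W_1} \circ j_V$ and $j_{W_0}([\mathrm{id}]_U) < j_{W_1}([\mathrm{id}]_V)$. A Lipschitz reduction I will think of as a winning strategy for Player II in the natural generalization of the Lipschitz game, where both players play increasing elements of $\delta$ and Player II must land in every $V$-measure-one set whenever Player I lands in every $U$-measure-one set, with Player II's $n$th move depending only on Player I's first $n$ moves.

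First, I would prove $\leq_L \subseteq \swo$ in ZFC. A Lipschitz strategy witnessing $U \leq_L V$ can be applied internally in $M_V$ to the generic seed $[\mathrm{id}]_V$ to produce an ordinal below $[\mathrm{id}]_V$ which, by the strategy's winning condition and countable completeness, represents the seed of $U$ with respect to an internal ultrafilter on $M_V$. Taking $W_1$ to be this internal ultrafilter and $W_0$ the corresponding ultrafilter on $M_U$ built from the strategy gives the comparison required by the seed order, with strict inequality of seeds coming from the Lipschitz condition.

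For the converse, I would use UA to supply, given $U \swo V$, a comparison $(W_0, W_1)$ as above, and then decode this abstract factorization into a Lipschitz strategy. The idea is to have Player II, at stage $n$, consult the comparison: given Player I's finite sequence, compute in $M_V$ (via $j_{W_1}$) the image of the seed of $U$ and read off a legal next move for Player II below $[\mathrm{id}]_V$. Countable completeness guarantees that the infinite play produced by this procedure still lies in the intersection of any countable family of $V$-measure-one sets, and the equation $j_{W_0} \circ j_U = j_{W_1} \circ j_V$ together with $j_{W_0}([\mathrm{id}]_U) < j_{W_1}([\mathrm{id}]_V)$ ensures the winning condition. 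The canonicity of the UA comparison is what makes the local choices cohere into an actual strategy rather than merely a nondeterministic reduction.

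The main obstacle is this second direction: translating a static internal-comparison picture into a dynamic stage-by-stage strategy. The seed order is \emph{a priori} coarser than the Lipschitz order because it asserts only the existence of an internal factorization, whereas a Lipschitz reduction requires a concrete continuous witness. If the direct construction above proves delicate, a fallback is to exploit linearity: the main theorem of the paper gives that $\swo$ is a wellorder under UA, and the inclusion $\leq_L \subseteq \swo$ combined with an independent proof that $\leq_L$ is also linear under UA forces the two orders to agree by induction on seed-rank, since a strict wellorder cannot be properly refined by another linear order with the same domain.
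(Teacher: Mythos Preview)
Your proposal rests on a misreading of the Lipschitz order as defined in this paper. Here \(U\sLi W\) is witnessed by a \emph{strongly Lipschitz function} \(f:P(\delta)\to P(\delta)\) with \(U=f^{-1}[W]\); equivalently, the associated game has length \(2\cdot\delta\), with the two players playing out the \emph{characteristic functions} of subsets \(x,y\subseteq\delta\), and the payoff is \(x\in U\iff y\in W\). It is not an \(\omega\)-length game in which the players enumerate increasing ordinals and the winning condition refers to ``landing in every measure-one set.'' Your proposed constructions (applying a strategy to the seed \([\mathrm{id}]_V\), reading off ``the \(n\)th move'' from a comparison, invoking countable completeness to handle intersections of countably many measure-one sets) are all tailored to the latter picture and do not interface with the actual definition.

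You also have the easy and hard directions reversed. The ZFC-provable inclusion is \(\swo\ \subseteq\ \sLi\), not the other way around: the seed order is contained in the Ketonen order (a comparison is in particular a semicomparison), and a Ketonen witness \(\langle U_\alpha\rangle\) yields the strongly Lipschitz reduction \(f(X)=\{\alpha:X\cap\alpha\in U_\alpha\}\). The inclusion \(\sLi\ \subseteq\ \swo\) cannot be a ZFC theorem, since \(\sLi\) is always transitive while the transitivity of \(\swo\) is equivalent to UA. Once one has \(\swo\ \subseteq\ \sLi\), UA finishes the argument immediately: \(\swo\) is linear, \(\sLi\) is a strict partial order on the same domain, and a strict partial order extending a strict linear order must coincide with it. No stage-by-stage decoding of comparisons into strategies, and no induction on seed rank, is needed. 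Your fallback paragraph gestures at this order-theoretic trick but with the inclusion pointing the wrong way and with an unnecessary appeal to a separate linearity proof for \(\sLi\).
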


\begin{cor}[UA]
Lipschitz Determinacy holds for countably complete ultrafilters on ordinals.
\end{cor}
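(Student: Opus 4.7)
The plan is to derive the corollary directly from the two theorems stated immediately above. The opening theorem gives, under UA, that the seed order $\swo$ is a wellorder of the countably complete uniform ultrafilters on ordinals, and the preceding theorem identifies the Lipschitz order with the seed order on this same class. Composing these, the Lipschitz order on countably complete uniform ultrafilters on an ordinal $\delta$ is itself a wellorder, and hence in particular linear: for any two such $U$ and $W$, one of $U \leq_L W$ or $W \leq_L U$ holds.

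The remaining content is to check that linearity of the Lipschitz order is precisely what Lipschitz Determinacy asserts in this setting. Following the classical template for the Wadge/Lipschitz games on ${}^{\omega}2$, the Lipschitz order $U \leq_L W$ should be defined in terms of the existence of a winning strategy for one of the two players in a Lipschitz game $G_L(U,W)$ attached to the pair $(U,W)$; determinacy of $G_L(U,W)$ is the statement that one of the two players has a winning strategy. Because an ultrafilter already resolves the choice between a set and its complement, the usual complementation dichotomy of Wadge collapses, and determinacy of $G_L(U,W)$ is equivalent to the comparability of $U$ and $W$ in $\leq_L$.

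Granting this equivalence, the proof proceeds by citing the previous theorem to replace $\leq_L$ with $\swo$ and then noting that $\swo$ is a wellorder under UA. This yields determinacy of every Lipschitz game between countably complete uniform ultrafilters on ordinals. The one substantive point — and the only place I expect real work — is verifying the translation between determinacy of $G_L(U,W)$ and the disjunction \(U \leq_L W\) or \(W \leq_L U\), i.e., unpacking the game-theoretic definition of the Lipschitz order on ultrafilters sufficiently to see that linearity of $\leq_L$ is honest determinacy of every game $G_L(U,W)$. Once this piece of bookkeeping is in place, the corollary is an immediate consequence of the two preceding theorems.
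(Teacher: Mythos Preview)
Your proposal is correct and matches the paper's approach: under UA the Lipschitz order coincides with the seed order, which is linear, so the Lipschitz order is linear. The one place you flag as requiring real work---translating between determinacy of the game $G_L(U,W)$ and linearity of the Lipschitz order---is unnecessary here: the paper simply \emph{defines} Ultrafilter Determinacy to be the statement that the Lipschitz order on $\Un$ is linear (the game-theoretic description is given only as motivation), so once the Lipschitz order is identified with the seed order the corollary is immediate.
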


Therefore the Ultrapower Axiom implies a form of long determinacy for ultrafilters. We also prove a partial converse, but the following remains open:
\begin{qst}
Suppose the Lipschitz order is linear on countably complete uniform ultrafilters on ordinals. Does the Ultrapower Axiom hold?
\end{qst}
\section{Notation}
We introduce a bit of notation that will save some ink in the sequel.

\begin{defn}
Suppose \(P\) is an inner model and \(U\) is an \(P\)-ultrafilter. We write \((M_U)^P\) to denote the ultrapower of \(P\) by \(U\) using functions in \(P\). We write \((j_U)^P\) to denote the ultrapower embedding from \(P\) to \((M_U)^P\) associated to \(U\). For any function \(f\in P\), we write \([f]^P_U\) to denote the point represented by \(f\) in \((M_U)^P\).
\end{defn}

We will often omit the parentheses in this notation, writing \(j_U^P\) and \(M_U^P\).

\begin{defn}
Suppose \(M\) and \(N\) are inner models. An elementary embedding \(i : M \to N\) is an {\it ultrapower embedding} if there is an \(M\)-ultrafilter \(U\) such that \(i = (j_U)^M\). An elementary embedding \(i : M \to N\) is an {\it internal ultrapower embedding} if there is an \(M\)-ultrafilter \(U\in M\) such that \(i = (j_U)^M\).

We say \(N\) is an {\it ultrapower} of \(M\) if there is an ultrapower embedding from \(M\) to \(N\). We say \(N\) is an {\it internal ultrapower} of \(M\) if there is an internal ultrapower embedding from \(M\) to \(N\). 
\end{defn}

Our definition of an ultrapower embedding reflects our focus on countably complete ultrafilters: {\it for example, an elementary embedding \(j : V\to M\) where \(M\) is illfounded is never an ultrapower embedding by our definition}. We note that there is a characterization of ultrapower embeddings that does not mention ultrafilters:
\begin{lma}
An elementary embedding \(j : M\to N\) is an ultrapower embedding if there is some \(a\in N\) such that \(N = H^N(j[M]\cup \{a\})\).\qed
\end{lma}

\section{The Ultrapower Axiom}
We find that the following notational device clarifies the statements of various definitions and theorems.

\begin{defn}
Suppose \(M_0\), \(M_1,\) and \(N\) are transitive models of set theory. We write \((i_0,i_1) : (M_0,M_1)\to N\) to denote that \(i_0 : M_0\to N\) and \(i_1 : M_1\to N\) are elementary embeddings.
\end{defn}

\begin{defn}
Suppose \(j_0 : V\to M_0\) and \(j_1 : V\to M_1\) are ultrapower embeddings. A {\it comparison} of \((j_0,j_1)\) is a pair \((i_0,i_1) : (M_0,M_1)\to N\) of internal ultrapower embeddings such that \(i_0 \circ j_0 = i_1\circ j_0\).
\end{defn}

\begin{ua}
Every pair of ultrapower embeddings admits a comparison.
\end{ua}

\section{The seed order}
\begin{defn}
If \(\alpha\) is an ordinal, the {\it tail filter} on \(\alpha\) is the filter generated by sets of the form \(\alpha\setminus \beta\) for \(\beta < \alpha\). 

An ultrafilter on \(\alpha\) is {\it tail uniform} (or just {\it uniform}) if it extends the tail filter. 

If \(U\) is a uniform ultrafilter on an ordinal, then the {\it space} of \(U\), denoted \(\textsc{sp}(U)\), is the unique ordinal \(\alpha\) such that \(\alpha\in U\). 

The class of countably complete uniform ultrafilters is denoted by \(\Un\).
\end{defn}

\begin{defn}
The {\it seed order} is defined on countably complete uniform ultrafilters \(U_0\) and \(U_1\) by setting \(U_0\swo U_1\) if there is a comparison \((i_0,i_1) : (M_{U_0},M_{U_1})\to N\) of \((j_{U_0},j_{U_1})\) such that \(i_0([\text{id}]_{U_0}) < i_1([\text{id}]_{U_1})\).
\end{defn}

In \cite{UA}, we prove that the Ultrapower Axiom implies that the seed order is linear on countably complete ultrafilters. The totality of the seed order on countably complete ultrafilters trivially implies the Ultrapower Axiom. Here we just note that the transitivity of the seed order also implies the Ultrapower Axiom. We need an easy lemma whose proof appears in \cite{UA}.

\begin{lma}\label{SpaceLemma1}
Suppose \(U,W\in \Un\) and \(U \swo W\). Then \(\textsc{sp}(U) \leq \textsc{sp}(W)\).
\end{lma}

\begin{prp}
Assume the seed order is transitive. Then the Ultrapower Axiom holds.
\begin{proof}[Sketch]
Fix countably complete ultrafilters \(U,W\). We will show that the pair \((j_U,j_W)\) admits a comparison. 

Let \(\alpha = \textsc{sp}(U)\). Let \(W'\) be the uniform ultrafilter derived from \(W\) using \(\langle [\text{id}]_W,j_W(\alpha)\rangle\) where \(\langle -,-\rangle\) denotes some reasonable pairing function. Then by \cref{SpaceLemma1}, \[U\swo P_\alpha\wo W'\] (Recall that \(P_\alpha\) denotes the uniform principal ultrafilter on \(\alpha+1\) concentrated at \(\alpha\).) By the transitivity of the seed order, \(U\swo W'\). Therefore there is a comparison of \((j_U,j_{W'})\). But \(j_{W'} = j_W\), so there is a comparison of \((j_U,j_W)\), as desired.
\end{proof}
\end{prp}

\subsection{The Ketonen order}
The fact that the transitivity of the seed order is equivalent to UA suggests that in the context of ZFC alone, the definition of the seed order is not really the correct one. This motivates the definition of the Ketonen order, which combinatorially is actually somewhat simpler than that of the seed order.

\begin{defn}
The {\it Ketonen order} is defined on countably complete uniform ultrafilters \(U\) and \(W\) by setting \(U\sE W\) if there is a sequence of countably complete ultrafilters \(U_\alpha\) on \(\alpha\), defined whenever \(0 < \alpha <\textsc{sp}(W)\), such that for all \(X\subseteq \textsc{sp}(U)\), \(X\in U\) if and only if \(X\cap \alpha\in U_\alpha\) for \(W\)-almost all \(\alpha\). 
\end{defn}

The main theorem of this section is that the Ketonen order is a strict wellfounded partial order on the class of countably complete uniform utlrafilters on ordinals. 

We begin by rephrasing the definition of the Ketonen order in two simple ways.
\begin{defn}
Suppose \(U\) is a countably complete ultrafilter and in \(M_U\), \(W'\) is a countably complete uniform ultrafilter on an ordinal \(\delta'\). Then the {\it \(U\)-limit of \(W'\)} is the ultrafilter \[U^-(W') = \{X\subseteq \delta : j_U(X)\cap \delta'\in W'\}\] where \(\delta\) is the least ordinal such that \(\delta'\leq j_U(\delta)\).
\end{defn}

Clearly \(U^-(W')\) is a countably complete uniform ultrafilter on \(\delta\), and \(U^-(W)\) is invariant under replacing \(U\) with an isomorphic ultrafilter. The Ketonen order is related to limits in the following straightforward way:

\begin{lma}[UA]\label{SeedLimit}
If \(U\) and \(W\) are countably complete uniform ultrafilters, then \(U\sE W\) if and only if there a countably complete uniform ultrafilter \(U'\) of \(M_W\) such that \(\textsc{sp}(U') \leq [\textnormal{id}]_W\) and \(W^-(U') = U\).\qed
\end{lma}

On the other hand, there is a characterization of limits in terms of elementary embeddings:

\begin{lma}\label{LimitEmbedding}
Suppose \(U\) is a countably complete ultrafilter and \(W'\) is a countably complete uniform ultrafilter of \(M_U\). Then \(U^-(W')\) is the unique uniform ultrafilter \(W\) such that there is an elementary embedding \(k : M_W\to M^{M_U}_{W'}\) such that \(k\circ j_W = j^{M_U}_{W'}\circ j_U\) and \(k([\textnormal{id}]_W)= [\textnormal{id}]_{W'}^{M_U}\).\qed
\end{lma}

This leads to a characterization of the Ketonen order that looks very similar to the definition of the seed order.

\begin{defn}
Suppose \(j_0 : V\to M_0\) and \(j_1 : V\to M_1\) are ultrapower embeddings. A pair of ultrapower embeddings \((i_0,i_1) :(M_0,M_1)\to N\) is a {\it semicomparison} of \((j_0,j_1)\) if \(i_1\) is an internal ultrapower embedding of \(M_1\) and \(i_0\circ j_0 = i_1 \circ j_1\).
\end{defn}

We warn that the notion of a semicomparison of \((j_0,j_1)\) is not symmetric in \(j_0\) and \(j_1\). 

\begin{lma}\label{SeedComparison}
If \(U_0\) and \(U_1\) are countably complete uniform ultrafilters, then \(U_0\sE U_1\) if and only if there is a semicomparison \((i_0,i_1) : (M_{U_0},M_{U_1})\to N\) of \((j_{U_0},j_{U_1})\) such that \(i_0([\textnormal{id}]_{U_0}) < i_1([\textnormal{id}]_{U_1})\).
\begin{proof}
Suppose \(U_0\sE U_1\). By \cref{SeedLimit}, fix a countably complete uniform ultrafilter \(W'\) of \(M_{U_1}\) such that \(\textsc{sp}(W') \leq [\text{id}]_{U_1}\) and \(U_1^-(W') = U_0\). By \cref{LimitEmbedding}, there is an elementary embedding \(k : M_{U_0}\to M^{M_{U_1}}_{W'}\) such that \(k\circ j_{U_0} = j_{W'}^{M_{U_1}}\circ j_{U_1}\) and \[k([\text{id}]_{U_0}) = [\text{id}]_{W'}^{M_{U_1}} < j_{W'}^{M_{U_1}}([\text{id}]_{U_1}) \] with the last inequality following from the fact that \(\textsc{sp}(W') \leq [\text{id}]_{U_1}\). It follows that \((k,j_{W'}^{M_{U_1}})\) is a semicomparison of \((j_{U_0},j_{U_1})\) with \(k([\text{id}]_{U_0}) < j_{W'}^{M_{U_1}}([\text{id}]_{U_1})\), as desired.

Conversely suppose \((i_0,i_1) : (M_{U_0},M_{U_1})\to N\) is a semicomparison of \((j_{U_0},j_{U_1})\) with \(i_0([\text{id}]_{U_0}) < i_1([\text{id}]_{U_1})\). Let \(W'\) be the uniform \(M_{U_1}\)-ultrafilter derived from \(i_1\) using \(i_0([\text{id}]_{U_0})\). Then \(\textsc{sp}(W)\leq [\text{id}]_{U_1}\). Moreover easily \(U_1^-(W') = U_0\). So by \cref{SeedLimit}, \(U_0\sE U_1\), as desired.
\end{proof}
\end{lma}

\begin{cor}\label{KSeed}
The Ketonen order extends the seed order.
\end{cor}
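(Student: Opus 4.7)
The plan is to observe that \cref{SeedComparison} has rephrased the Ketonen order in a form that is almost identical to the definition of the seed order, the only difference being that a semicomparison requires only the right-hand embedding \(i_1\) to be an internal ultrapower embedding, whereas a comparison requires both \(i_0\) and \(i_1\) to be internal. Thus every comparison is automatically a semicomparison, and the containment of the seed order in the Ketonen order should follow at once.

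Concretely, suppose \(U_0 \swo U_1\). Unwinding the definition, fix a comparison \((i_0, i_1) : (M_{U_0}, M_{U_1}) \to N\) of \((j_{U_0}, j_{U_1})\) with \(i_0([\textnormal{id}]_{U_0}) < i_1([\textnormal{id}]_{U_1})\). Then \(i_1\) is an internal ultrapower embedding of \(M_{U_1}\), \(i_0\) is an ultrapower embedding of \(M_{U_0}\), and \(i_0 \circ j_{U_0} = i_1 \circ j_{U_1}\). This is exactly the data of a semicomparison of \((j_{U_0}, j_{U_1})\), and it satisfies the seed inequality \(i_0([\textnormal{id}]_{U_0}) < i_1([\textnormal{id}]_{U_1})\). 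Invoking the semicomparison characterization of \(\sE\) in \cref{SeedComparison} yields \(U_0 \sE U_1\).

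There is no real obstacle here; the entire content of the corollary is the observation that ``internal ultrapower embedding'' is stronger than ``ultrapower embedding,'' so that the class of comparisons sits inside the class of semicomparisons. The one thing worth flagging is the asymmetry of semicomparisons: the Ketonen-side witness demands internality of the embedding out of the \emph{larger} ultrafilter \(U_1\), which is exactly what a comparison already supplies, so no choice of orientation needs to be made.
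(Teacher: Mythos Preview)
Your argument is correct and is exactly the intended one: the paper states this as an immediate corollary of \cref{SeedComparison}, and the point is precisely that every comparison is in particular a semicomparison, so a witness to $U_0\swo U_1$ is automatically a witness to $U_0\sE U_1$.
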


We use this characterization of the Ketonen order to prove its transitivity.
\begin{lma}
The Ketonen order is transitive.
\begin{proof}
Suppose \(U_0\sE U_1 \sE U_2\). We will show \(U_0\sE U_2\). Fix comparisons \((i_0,i_1) : (M_{U_0},M_{U_1})\to N\) and \((i_1',i'_2) : (M_{U_1},M_{U_2})\to N'\). Let \(P = i_1'(N)\). Then \[(i_1'\circ i_0, i_1'(i_1)\circ i'_2) : (M_{U_0},M_{U_2})\to P\] and \[i_1'\circ i_0\circ j_{U_0} = i_1'\circ i_1\circ j_{U_1} = i_1'(i_1)\circ i_1'\circ j_{U_1}= i_1'(i_1)\circ i'_2\circ j_{U_2}\] so \((i_1'\circ i_0, i_1'(i_1)\circ i'_2)\) is a semicomparison of \((j_{U_0},j_{U_1})\).

Finally \[i_1'\circ i_0([\text{id}]_{U_0}) < i_1'\circ i_1([\text{id}]_{U_1}) = i_1'(i_1)\circ i_1'([\text{id}]_{U_1}) < i_1'(i_1)\circ i_2'([\text{id}]_{U_2})\] so by \cref{SeedComparison}, \((i_1'\circ i_0, i_1'(i_1)\circ i'_2)\) witnesses \(U_0\sE U_2\), as desired.
\end{proof}
\end{lma}

The proof that the Ketonen order is wellfounded is a bit more subtle, and apparently it was not known to Ketonen (who proved it only in the special case of weakly normal ultrafilters). We give a combinatorial proof here that uses the following lemma which allows us to copy the structure of the Ketonen order in \(V\) into its ultrapowers.

\begin{lma}\label{Resemblance}
Suppose \(U,W,\) and \(Z\) are uniform ultrafilters and \(U\sE W\). Suppose \(W_*\in \textnormal{Un}^{M_Z}\) is such that \(Z^-(W_*) = W\). Then there is some \(U_*\sE^{M_Z} W_*\) with \(Z^-(U_*) = U\).
\begin{proof}
Since \(U\sE W\), by \cref{SeedLimit} there is  a uniform ultrafilter \(U'\) of \(M_W\) with \(\textsc{sp}(U') \leq [\text{id}]_W\) and \(W^-(U') = U\). 

Let \(k : M_W \to M^{M_Z}_{W_*}\) be the unique elementary embedding with \(k\circ j_W = j^{M_Z}_{W_*}\circ j_Z\) and \(k([\text{id}_W]) = [\text{id}]^{M_W}_{W*}\).
\begin{figure}[h]
\begin{center}
\includegraphics[scale=.8]{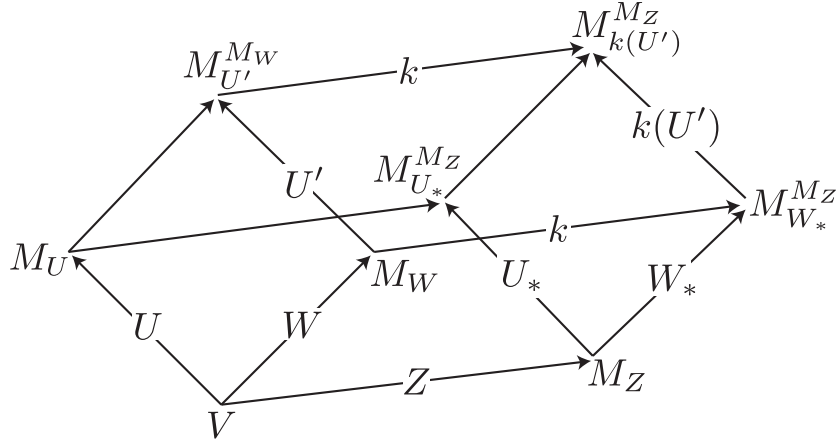}
\caption{Copying \(\sE\) into \(M_Z\).}
\end{center}
\end{figure}
Let \(U_* = W_*^-(k(U'))\). Since \(\textsc{sp}(k(U')) \leq k([\text{id}]_W) = [\text{id}]^{M_Z}_{W_*}\), \(k(U')\) witnesses that \(U_*\sE^{M_Z} W_*\). Moreover,
\begin{align*}Z^-(U_*) &= Z^-(W_*^-(k(U')))\\
&= \{X\subseteq \delta : j^{M_Z}_{W_*}\circ j_Z(X)\in k(U')\}\\
&= \{X\subseteq \delta : k\circ j_W(X)\in k(U')\}\\
&= \{X\subseteq \delta : j_W(X)\in U'\}\\
&= W^-(U') = U
\end{align*}
This completes the proof.
\end{proof}
\end{lma}
\begin{thm}\label{Wellfoundedness} The Ketonen order is wellfounded.
\begin{proof}
Assume towards a contradiction that \(\delta\) is the least ordinal carrying a countably complete uniform ultrafilter \(U_0\) below which the Ketonen order is illfounded. 

Fix a sequence \(U_0\sgE U_1 \sgE U_2 \sgE\cdots\) witnessing this. Using the fact that \(U_1\sE U_0\), fix \(U^*_1\) such that \(U_0^-(U^*_1) = U_1\) and \(\textsc{sp}(U^*_1) \leq [\text{id}]_{U_0}\). Assume \(n\geq 1\) and we have defined \(U^*_1\sgE^{M_{U_0}} U^*_2\sgE^{M_{U_0}} \cdots \sgE^{M_{U_0}} U^*_n\) such that \(U_0^-(U^*_n) = U_n\). Using \cref{Resemblance} and the fact that \(U_{n+1} \sE U_n\), fix \(U^*_{n+1}\sE^{M_{U_0}} U^*_n\) such that \(U^-_0(U^*_{n+1}) = U_{n+1}\).

Continuing this way, we produce a sequence \(U^*_1\sgE^{M_{U_0}} U^*_2\sgE^{M_{U_0}} U^*_3\sgE^{M_{U_0}} \cdots\) witnessing that the seed order of \(M_{U_0}\) is illfounded below \(U^*_1\). 

But \(\textsc{sp}(U^*_1) \leq [\text{id}]_{U_0} < j_{U_0}(\delta)\), while in \(M_{U_0}\), the least ordinal carrying a countably complete uniform ultrafilter below which the Ketonen order is illfounded is \(j_{U_0}(\delta)\) by elementarity. Wellfoundedness is absolute between \(M_{U_0}\) and \(V\) since \(U_0\) is countably complete. This is a contradiction.
\end{proof}
\end{thm}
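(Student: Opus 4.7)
The plan is to argue by minimal counterexample, transferring a hypothetical infinite descending chain into a countably complete ultrapower so as to violate elementarity. Suppose towards contradiction that $\sE$ is illfounded, and let $\delta$ be the least ordinal carrying a countably complete uniform ultrafilter $U_0$ with an infinite descending chain $U_0 \sgE U_1 \sgE U_2 \sgE \cdots$ below it. The point of picking $\delta$ minimal is that in $M_{U_0}$, by elementarity, $j_{U_0}(\delta)$ plays the same role.

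Next I would copy this chain into $M_{U_0}$. Applying \cref{SeedLimit} to $U_1 \sE U_0$, I fix a uniform ultrafilter $U_1^* \in \Un^{M_{U_0}}$ with $\textsc{sp}(U_1^*) \leq [\text{id}]_{U_0}$ and $U_0^-(U_1^*) = U_1$. Then by induction, \cref{Resemblance} applied with $Z = U_0$, $W = U_n$, $U = U_{n+1}$, and $W_* = U_n^*$ yields some $U_{n+1}^* \sE^{M_{U_0}} U_n^*$ with $U_0^-(U_{n+1}^*) = U_{n+1}$. The resulting sequence $\langle U_n^* : n \geq 1 \rangle$ lives in $V$, but since $U_0$ is countably complete, $M_{U_0}$ is closed under countable sequences from $V$, so from $M_{U_0}$'s point of view this is an authentic infinite $\sgE^{M_{U_0}}$-descending chain below $U_1^*$.

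To finish, observe that $\textsc{sp}(U_1^*) \leq [\text{id}]_{U_0} < j_{U_0}(\delta)$, so inside $M_{U_0}$ there is a countably complete uniform ultrafilter on an ordinal strictly below $j_{U_0}(\delta)$ beneath which $\sE^{M_{U_0}}$ is illfounded. By elementarity of $j_{U_0}$ this contradicts the minimality of $\delta$. The main substantive step is the transfer to $M_{U_0}$, which is essentially carried out by \cref{Resemblance}; once that lemma is in hand, what remains is just the elementarity argument together with the standard absoluteness of wellfoundedness of $\omega$-sequences provided by countable completeness of $U_0$.
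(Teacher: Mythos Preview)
Your argument is essentially identical to the paper's: both pick a minimal $\delta$, use \cref{SeedLimit} to start and \cref{Resemblance} to inductively copy the descending chain into $M_{U_0}$, and then invoke countable completeness of $U_0$ (closure of $M_{U_0}$ under $\omega$-sequences) together with elementarity to derive the contradiction with the minimality of $\delta$. Your identification of the parameters in the application of \cref{Resemblance} is correct, and nothing is missing.
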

\section{The Reciprocity Lemma}
In this section we prove that the linearity of the Ketonen order implies the Ultrapower Axiom. We begin by establishing some a general fact about limits ultrafilters under UA that motivates the proof.
\subsection{Translations and canonical comparisons}
In this section we define the notion of a translation function, which gives a seed order theoretic perspective on the structure of comparisons of ultrafilters.

\begin{defn}[UA]
For \(U,W\in \Un\), \(\tr{W}{U}\) denotes the \(\swo^{M_W}\)-least \(U'\in \Un^{M_W}\) such that \(W^-(U') = U\). The function \(t_W : \Un\to \Un^{M_W}\) is called the {\it translation function} associated to \(W\).
\end{defn}

An immediate consequence of the definition of translation functions is the following bound:

\begin{lma}[UA]\label{BoundingLemma}
For any \(U,W\in \Un\), \(\tr U W \wo^{M_U} j_U(W)\).
\begin{proof}
Note that \(U^-(j_U(W)) = W\), so by the minimality of \(\tr U W\), \(\tr U W \wo^{M_U} j_U(W)\).
\end{proof}
\end{lma}

The main fact about translation functions is that they arise from comparisons:

\begin{thm}[UA]\label{ReciprocityCor}
Suppose \(U,W\in \Un\). Suppose \((i'_U,i'_W) : (M_U,M_W)\to N'\) is a comparison of \((j_U,j_W)\). Then \(\tr{W}{U}\) is the \(M_W\)-ultrafilter derived from \(i'_W\) using \(i'_U([\textnormal{id}]_U)\).
\end{thm}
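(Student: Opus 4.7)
The plan is to prove that the $M_W$-ultrafilter $W'$ derived from $i'_W$ using the seed $s = i'_U([\textnormal{id}]_U)$ equals $\tr W U$, by squeezing it from both sides in the seed order $\swo^{M_W}$ and invoking the fact that $\swo^{M_W}$ is antisymmetric, being a wellorder under UA.

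For the first (easy) inequality $\tr W U \swo^{M_W} W'$, I would verify directly that $W^-(W') = U$. Unfolding the definition of the $W$-limit, membership $X \in W^-(W')$ amounts to $s \in i'_W(j_W(X) \cap \delta')$, where $\delta'$ is the space of $W'$; using $s < i'_W(\delta')$ together with the commutation $i'_W \circ j_W = i'_U \circ j_U$, this reduces to $i'_U([\textnormal{id}]_U) \in i'_U(j_U(X))$, which by elementarity of $i'_U$ is equivalent to $[\textnormal{id}]_U \in j_U(X)$, i.e.\ $X \in U$. Hence $W' \in \Un^{M_W}$ has $W^-(W') = U$, and the $\swo^{M_W}$-minimality clause in the definition of $\tr W U$ immediately yields the desired inequality.

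For the reverse inequality I would invoke the Reciprocity Lemma proved earlier in this section. That lemma supplies a canonical comparison of $(j_U,j_W)$ of the form $(j^{M_U}_{\tr U W}, j^{M_W}_{\tr W U}) : (M_U, M_W) \to N$ in which the seed $[\textnormal{id}]_U$ is sent to $[\textnormal{id}]^{M_W}_{\tr W U}$, so that the $M_W$-ultrafilter derived from its second coordinate at that seed is transparently $\tr W U$. To transfer this identification to the arbitrary comparison $(i'_U,i'_W)$, I would apply UA to the pair of ultrapower embeddings $i'_U\circ j_U : V \to N'$ and $j^{M_U}_{\tr U W}\circ j_U : V \to N$, obtain a common refinement, and check that derived ultrafilters and their seeds are preserved by the further internal ultrapower maps, forcing the two comparisons to yield the same derived ultrafilter on the $W$-side.

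The principal obstacle is this second inequality: $\swo^{M_W}$-minimality of $\tr W U$ only bounds it from below, and the matching upper bound on $W'$ cannot be extracted from the combinatorics of $W^-$ alone; it is genuinely the content of the Reciprocity Lemma, which controls the structure of arbitrary comparisons through the translation functions, that closes the gap.
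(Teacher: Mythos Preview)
Your first inequality is correct and matches the easy half of the paper's Lemma~\ref{Reciprocity1}. The difficulty is entirely in the second step, and your outline there has a real gap.

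The paper does not squeeze. It first proves Lemma~\ref{Reciprocity1}, which establishes the statement for the \emph{canonical} comparison $(i_U,i_W)$, and then proves Theorem~\ref{ReciprocityCor} in two lines by invoking the universal property of canonical comparisons: there is a single factor map $h:N\to N'$ with $h\circ i_U=i'_U$ and $h\circ i_W=i'_W$, and one checks immediately that deriving an ultrafilter is invariant under postcomposition by $h$. The heavy lifting (Theorem~\ref{MinDefEmb}) is done once, inside the proof of Lemma~\ref{Reciprocity1}.

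Your plan instead appeals to a ``Reciprocity Lemma proved earlier'' furnishing the comparison $(j^{M_U}_{\tr U W},j^{M_W}_{\tr W U})$ with $j^{M_U}_{\tr U W}([\text{id}]_U)=[\text{id}]^{M_W}_{\tr W U}$. In the paper's organization this is a consequence of Lemma~\ref{Reciprocity1} (applied to both $(U,W)$ and $(W,U)$), not something available independently; the later Reciprocity Theorem only gives $U\oplus\tr U W=W\oplus\tr W U$, from which the seed identity must still be extracted. More importantly, your transfer step does not go through as stated. Applying UA to the pair $i'_U\circ j_U$ and $j^{M_U}_{\tr U W}\circ j_U$ yields internal ultrapower embeddings $h':N'\to P$ and $h:N\to P$ with $h'\circ i'_U\circ j_U=h\circ j^{M_U}_{\tr U W}\circ j_U$, but this only says $h'\circ i'_U$ and $h\circ j^{M_U}_{\tr U W}$ agree on $j_U[V]$; it does \emph{not} give $h'\circ i'_U([\text{id}]_U)=h\circ j^{M_U}_{\tr U W}([\text{id}]_U)$, which is exactly what you need for the seeds to match. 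Closing this requires Theorem~\ref{MinDefEmb} (applied in both directions, using that both composites are definable over $M_U$), and the analogous argument on the $M_W$ side. You do not mention this, and ``check that derived ultrafilters and their seeds are preserved'' hides precisely the nontrivial step. Once you add that, you have essentially reproved the existence of the factor map that the paper simply quotes from the theory of canonical comparisons.
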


This will follow from immediately from the special case in which the comparison in question is the canonical comparison:
\begin{defn}
Suppose \(j_0 : V\to M_0\) and \(j_1 : V\to M_1\) are ultrapower embeddings. A comparison \((i_0,i_1) : (M_0,M_1)\to N\) of \((j_0,j_1)\) is {\it canonical} if for any comparison \((i_0',i_1') : (M_0,M_1)\to N'\), there is an elementary embedding \(h : N\to N'\) such that \(h\circ i_0 = i_0'\) and \(h\circ i_1 = i_1'\).
\end{defn}

The following theorem is proved in \cite{RF} Section 5:

\begin{thm}[UA]
Every pair of ultrapower embeddings has a canonical comparison.
\end{thm}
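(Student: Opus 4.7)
The plan is to build a canonical comparison by taking any comparison supplied by UA and quotienting down to the Skolem hull generated by the two source ultrapowers.

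Fix ultrafilters \(U_0, U_1\) with \(j_0 = j_{U_0}\) and \(j_1 = j_{U_1}\). By UA, fix any comparison \((i_0,i_1) : (M_0,M_1) \to N\). Let \(H = H^N(i_0[M_0] \cup i_1[M_1]) \preceq N\), let \(\pi : \bar N \to H\) be the inverse of the transitive collapse, and set \(\bar i_0 = \pi^{-1} \circ i_0\) and \(\bar i_1 = \pi^{-1} \circ i_1\). Then \(\bar i_0 \circ j_0 = \bar i_1 \circ j_1\) and \(\bar N = H^{\bar N}(\bar i_0[M_0] \cup \bar i_1[M_1])\) by construction. Every element of \(M_1\) has the form \(j_1(f)([\text{id}]_{U_1})\) for some \(f \in V\), so using \(\bar i_1 \circ j_1 = \bar i_0 \circ j_0\) one sees that every element of \(\bar i_1[M_1]\) lies in the hull of \(\bar i_0[M_0] \cup \{\bar i_1([\text{id}]_{U_1})\}\). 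Hence \(\bar N = H^{\bar N}(\bar i_0[M_0] \cup \{\bar i_1([\text{id}]_{U_1})\})\), exhibiting \(\bar i_0\) as an ultrapower embedding of \(M_0\); a symmetric argument handles \(\bar i_1\). It still has to be verified that the derived ultrafilters are members of \(M_0\) and \(M_1\) respectively; this is the main technical obstacle and will use the internality of the original \(i_0, i_1\).

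For canonicity, given any other comparison \((i_0',i_1') : (M_0,M_1) \to N'\), I would define \(h : \bar N \to N'\) by sending an element \(\tau^{\bar N}(\bar i_0(\vec x), \bar i_1(\vec y))\) of \(\bar N\) to \(\tau^{N'}(i_0'(\vec x), i_1'(\vec y))\) for any Skolem term \(\tau\) and parameters \(\vec x \in M_0, \vec y \in M_1\). Every element of \(\bar N\) admits such a representation, so \(h\) is determined if well-defined, and well-definedness reduces to matching the types of the generators: for any formula \(\varphi\), \(\bar N \vDash \varphi(\bar i_0(\vec x), \bar i_1(\vec y))\) iff \(N' \vDash \varphi(i_0'(\vec x), i_1'(\vec y))\). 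Writing each \(y_k\) as \(j_1(f_k)([\text{id}]_{U_1})\), the left-hand side reduces to a statement involving only \(\bar i_0\)-images and the single seed \(\bar i_1([\text{id}]_{U_1})\); one then applies elementarity and the common commutation \(\bar i_0 \circ j_0 = \bar i_1 \circ j_1 = i_0' \circ j_0 = i_1' \circ j_1\) to transfer the statement to \(N'\).

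The main obstacle is the internality verification in the first step. Standard hull arguments readily show \(\bar i_0\) and \(\bar i_1\) are ultrapower embeddings, but concluding that the corresponding derived ultrafilters actually lie inside \(M_0\) and \(M_1\) requires extra input; I expect this uses that the original \(i_0, i_1\) were internal together with the absoluteness of Rudin--Keisler reduction between \(M_0\) (resp.\ \(M_1\)) and its ultrapowers.
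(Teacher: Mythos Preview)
The paper does not actually prove this theorem; it cites \cite{RF}, Section 5. So there is no in-paper argument to compare against, but your proposal has a genuine gap, and it is not where you think it is.

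Internality is not the obstacle. Since \(i_0 : M_0 \to N\) is internal, \(N \subseteq M_0\), so the ordinal \(i_1([\text{id}]_{U_1})\) lies in \(M_0\), and the \(M_0\)-ultrafilter \(W_0\) derived from \(i_0\) using this seed is computed inside \(M_0\); hence \(W_0 \in M_0\). As \(\bar N\) is generated over \(\bar i_0[M_0]\) by the single element \(\bar i_1([\text{id}]_{U_1})\), one gets \(\bar i_0 = (j_{W_0})^{M_0}\), which is internal; symmetrically for \(\bar i_1\). So the part you flagged as the ``main technical obstacle'' is routine.

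The real gap is the well-definedness of \(h\). By \L o\'s applied in \(M_0\), the equivalence \(\bar N \vDash \varphi(\bar i_0(\vec a), \bar i_1([\text{id}]_{U_1})) \iff N' \vDash \varphi(i_0'(\vec a), i_1'([\text{id}]_{U_1}))\) amounts exactly to \(W_0 = W_0'\), where \(W_0'\) is the \(M_0\)-ultrafilter derived from \(i_0'\) using \(i_1'([\text{id}]_{U_1})\). All the commutation conditions give you is \(U_0^-(W_0) = U_1 = U_0^-(W_0')\), and distinct \(M_0\)-ultrafilters can certainly share a \(U_0\)-limit. The phrase ``apply elementarity and the common commutation'' hides precisely the missing step. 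In fact, the assertion that every comparison derives the same \(W_0\) is \cref{ReciprocityCor}, which the paper proves \emph{using} the existence of canonical comparisons; so as written your argument is circular, and the well-definedness of \(h\) is the entire content of the theorem rather than a bookkeeping check.
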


To prove \cref{ReciprocityCor}, we will first show:

\begin{lma}[UA]\label{Reciprocity1}
Suppose \(U,W\in \Un\). Suppose \((i_U,i_W) : (M_U,M_W)\to N\) is the canonical comparison of \((j_U,j_W)\). Then \(\tr{W}{U}\) is the \(M_W\)-ultrafilter derived from \(i_W\) using \(i_U([\textnormal{id}]_U)\).
\end{lma}

For the proof we need a key fact about definable elementary embeddings proved in \cite{IR}.

\begin{thm}\label{MinDefEmb}
Suppose \(M\) and \(N\) are inner models and \(j,i : M \to N\) are elementary embeddings. Assume \(j\) is definable over \(M\) from parameters. Then \(j(\alpha) \leq i(\alpha)\) for all ordinals \(\alpha\).
\end{thm}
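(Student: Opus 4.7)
The plan is to prove $j(\alpha) \leq i(\alpha)$ by induction on $\alpha$. The base case $\alpha = 0$ is trivial, and successor steps are immediate from elementarity: if $j(\beta) \leq i(\beta)$, then $j(\beta+1) = j(\beta)+1 \leq i(\beta)+1 = i(\beta+1)$. At a limit ordinal $\alpha$, the induction hypothesis forces $\sup j[\alpha] \leq \sup i[\alpha] \leq i(\alpha)$, so the conclusion is automatic whenever $j$ is continuous at $\alpha$, i.e., $j(\alpha) = \sup j[\alpha]$.

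The substance of the proof lies in the case where $\alpha$ is a limit ordinal at which $j$ is \emph{discontinuous}, so $j(\alpha) > \sup j[\alpha]$. Here I would fix a formula $\varphi(x,y,z)$ and a parameter $p \in M$ with $j(\beta) = \gamma$ iff $M \models \varphi(\beta, \gamma, p)$, and consider the class function $\widetilde{\jmath}$ defined in $N$ by $\varphi(\cdot, \cdot, i(p))$. By elementarity of $i$, $\widetilde{\jmath}$ is a total functional on $N$ and satisfies $\widetilde{\jmath}(i(\beta)) = i(j(\beta))$ for every ordinal $\beta \in M$. That $j$ has a jump at $\alpha$ is a first-order property of the parameters $\alpha$ and $p$ expressible over $M$ via $\varphi$, so by elementarity $\widetilde{\jmath}$ has an analogous jump at $i(\alpha)$ inside $N$, with $\widetilde{\jmath}(i(\alpha)) = i(j(\alpha))$ strictly above the supremum of $\widetilde{\jmath}$ on ordinals below $i(\alpha)$. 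Assuming toward a contradiction that $\alpha$ is the least ordinal with $j(\alpha) > i(\alpha)$, the ordinal $i(\alpha)$ lies strictly between $\sup j[\alpha]$ and $j(\alpha)$; the goal is to use this gap together with the $M$-definable description of $j$'s jump at $\alpha$ to produce a strictly smaller ordinal $\alpha' < \alpha$ in $M$ at which $j(\alpha') > i(\alpha')$ already holds, violating the choice of $\alpha$.

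The main obstacle is carrying out this reflection step cleanly. The ordinal $i(\alpha)$ need not lie in $M$, so one cannot simply ``pull $i(\alpha)$ back'' along $i$, and the proof must genuinely use that $j$ is definable over $M$ (rather than being an arbitrary elementary embedding) in order to convert a failure located at the $N$-ordinal $i(\alpha)$ into one located at an actual $M$-ordinal $\alpha' < \alpha$. The needed mechanism is to identify, within $M$ via $\varphi$ and $p$, an ordinal below $\alpha$ that plays the same structural role with respect to $j$ that $i(\alpha)$ plays with respect to $\widetilde{\jmath}$ in $N$, and then to verify, using the induction hypothesis and the $M$-definability of $j$'s graph on $\alpha$, that the inequality $j \leq i$ must already fail there.
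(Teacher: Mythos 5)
This theorem is imported from \cite{IR}; the paper contains no proof of it, so your attempt has to stand on its own. As written it does not: everything you actually prove (the base case, successors, and continuity points of \(j\)) is routine and makes no use of the hypothesis that \(j\) is definable over \(M\), while the one case that carries the entire content of the theorem --- a limit \(\alpha\) at which \(j\) is discontinuous, with \(\sup j[\alpha] \leq i(\alpha) < j(\alpha)\) --- is left as a plan. You say so yourself: ``the main obstacle is carrying out this reflection step cleanly,'' and the ``needed mechanism'' you describe is never supplied. That is the gap, and it is exactly where the theorem lives.

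Beyond incompleteness, the specific plan is doubtful. You propose to contradict the minimality of \(\alpha\) by producing \(\alpha' < \alpha\) with \(j(\alpha') > i(\alpha')\). But the minimality of \(\alpha\) is consistent with \(j\) and \(i\) agreeing on every ordinal below \(\alpha\), in which case no such \(\alpha'\) can exist; so the contradiction cannot, in general, take the form of a smaller strict counterexample and must instead be extracted directly from the configuration \(\sup j[\alpha] \leq i(\alpha) < j(\alpha)\). Moreover, the reflection you invoke runs against the grain of elementarity: \(i\) transfers first-order facts about \(i\)-images from \(M\) up to \(N\), whereas the fact you want to exploit --- that the \(N\)-ordinal \(i(\alpha)\) sits strictly inside the gap \((\sup j[\alpha], j(\alpha))\) of the \(M\)-definable function \(\beta \mapsto j(\beta)\) --- involves parameters (\(\sup j[\alpha]\) and \(j(\alpha)\)) that are not \(i\)-images of anything relevant, and you give no device for pulling it back to an ordinal of \(M\) below \(\alpha\). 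Correctly observing that \(\widetilde{\jmath} = i(j)\) is discontinuous at \(i(\alpha)\) with \(\widetilde{\jmath}(i(\alpha)) = i(j(\alpha))\) is a reasonable first move, but nothing after it is justified, so the argument as proposed does not establish the theorem.
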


\begin{proof}[Proof of \cref{Reciprocity1}]
Let \(U'\) be the uniform \(M_W\)-ultrafilter derived from \(i_W\) using \(i_U([\textnormal{id}]_U)\). Note that \((j_{U'})^{M_W} = i_W\), by the proof of \cref{MinimalUltrapowers}.
By (3) to (1) of \cref{LimitEmbedding} with \(k = i_U\), \(U = W^-(U')\).

Fix \(U''\in M_W\) such that \(W^-(U'') = U\). We must show that \(U'\wo^{M_W} U''\). Let \((\ell',\ell'') : (M_{U'}^{M_W},M_{U''}^{M_W})\to N\) be a comparison of \((j_{U'}^{M_W},j_{U''}^{M_W})\). We must show that \(\ell'([\text{id}]^{M_W}_{U'}) \leq \ell''([\text{id}]^{M_W}_{U''})\).
Using \cref{LimitEmbedding}, let \(k : M_U\to (M_{U''})^{M_W}\) be the elementary embedding with \(k\circ j_U = (j_{U''})^{M_W}\circ j_W\) and \(k([\text{id}]_U) = [\text{id}]^{M_W}_{U''}\). Then \(\ell'\circ i_U\) and \(\ell''\circ k\) are elementary embeddings from \(M_U\) to \(N\). Moreover \(\ell'\circ i_U\) is definable from parameters over \(M_U\). Therefore by \cref{MinDefEmb}, for all ordinals \(\alpha\), \(\ell'\circ i_U(\alpha) \leq \ell''\circ k(\alpha)\). In particular, \(\ell'\circ i_U([\text{id}]_U) \leq \ell''\circ k([\text{id}]_U)\). In other words, \(\ell'([\text{id}]_{U'}^{M_W})\leq \ell''([\text{id}]_{U''}^{M_W})\), as desired.
\end{proof}

\begin{proof}[Proof of \cref{ReciprocityCor}]
Let \((i_U,i_W) : (M_U,M_W)\to N\) be the canonical comparison of \((j_U,j_W)\) and let \(h : N \to N'\) be an elementary embedding such that \(i_U' = h\circ i_U\) and \(i_W'= h\circ i_W\). Then the uniform \(M_W\)-ultrafilter derived from \(i_W'\) using \(i'_U([\textnormal{id}]_U)\) is the same as the one derived from \(i_W\) using \(i_U([\text{id}]_U)\) since \begin{align*}
i'_U([\textnormal{id}]_U) \in i_W'(X)&\iff h(i_U([\text{id}]_U))\in h(i_W(X))\\ &\iff i_U([\text{id}]_U)\in i_W(X)\end{align*}
Therefore by \cref{Reciprocity1}, the uniform \(M_W\)-ultrafilter derived from \(i_W'\) using \(i'_U([\textnormal{id}]_U)\) is equal to \(\tr{W}{U}\).
\end{proof}

As a corollary we can prove some basic facts about translation functions that are not at all obvious from the definition.

\begin{prp}[UA]\label{OrderPreserving}
For any countably complete ultrafilter \(W\), the function \(t_W : (\Un,\swo)\to (\Un^{M_W},\swo^{M_W})\) is order preserving.
\begin{proof}
Suppose \(U_0\swo U_1\). We must show \(\tr{W}{U_0}\swo^{M_W}\tr{W}{U_1}\). 

Let \((k_0,i_0) : (M_{U_0},M_W)\to N_0\) be the canonical comparison of \((j_{U_0},j_W)\) and let \((k_1,i_1) : (M_{U_1},M_W)\to N_1\) be the canonical comparison of \((j_{U_1},j_W)\).  Thus by \cref{Reciprocity1}, \(i_0 = j_{\tr{W}{U_0}}^{M_W}\) and \(i_1 = j_{\tr{W}{U_1}}^{M_W}\). Moreover by \cref{Reciprocity1}, letting \(\alpha_0 = [\text{id}]^{M_W}_{\tr{W}{U_0}}\) and \(\alpha_1 = [\text{id}]^{M_W}_{\tr{W}{U_1}}\), we have \(\alpha_0 = k_0([\text{id}]_{U_0})\) and \(\alpha_1 = k_1([\text{id}]_{U_1})\). 

Working in \(M_W\), fix a comparison \((h_0,h_1) : (N_0,N_1)\to P\) of \((i_0,i_1)\). To show show \(\tr{W}{U_0}\swo^{M_W}\tr{W}{U_1}\), it suffices to show that \(h_0(\alpha_0) < h_1(\alpha_1)\). But \(h_0(\alpha_0) = h_0\circ k_0([\text{id}]_{U_0})\) and \(h_1(\alpha_1) = h_1\circ k_1([\text{id}]_{U_1})\). Since \((h_0\circ k_0,h_1\circ k_1) : (M_{U_0},M_{U_1})\to P\) is a comparison of \((j_{U_0},j_{U_1})\), it must witness \(U_0\swo U_1\). That is, \(h_0\circ k_0([\text{id}]_{U_0}) < h_1\circ k_1([\text{id}]_{U_1})\). Hence \(h_0(\alpha_0) < h_1(\alpha_1)\), as desired.
\end{proof}
\end{prp}

\subsection{The linearity of the Ketonen order}
We now prove that the linearity of the Ketonen order implies the Ultrapower Axiom. The strategy is to prove the Reciprocity Lemma assuming only the linearity of the Ketonen order. Note first that translation functions can be defined assuming only the linearity of the Ketonen order.

\begin{defn}
Assume the Ketonen order is linear. If \(U\) is a countably complete ultrafilter and \(W\) is a countably complete uniform ultrafilter, then \(\tr{U}{W}\) denotes the \(\sE^{M_U}\)-least \(W_*\in \textnormal{Un}^{M_U}\) such that \(U^-(W_*) = W\).
\end{defn}

It is convenient to define an operation \(\oplus\) with the property that for any \(U\in \textnormal{Un}\) and \(W\in \textnormal{Un}^{M_W}\), \(U\oplus W\in \textnormal{Un}\) and \(j_{U\oplus W} = j^{M_U}_W\circ j_U\). (The usual ultrafilter sum operation does not have range contained in \(\textnormal{Un}\).) There are various ways in which one could do this, and our choice is motivated mostly by the desire that this operation work smoothly with the Ketonen order; see for example \cref{SumSeed}.

\begin{defn}
For \(\alpha,\beta\in \text{Ord}\), \(\alpha\oplus \beta\) denotes the {\it natural sum} of \(\alpha\) and \(\beta\), which is obtained as follows:

First write \(\alpha\) and \(\beta\) in Cantor normal form: \begin{align*}\alpha &= \sum_{\xi\in \text{Ord}} \omega^{\xi}\cdot m_\xi\\ \beta &= \sum_{\xi\in \text{Ord}} \omega^{\xi}\cdot n_\xi\end{align*} where \(m_\xi,n_\xi < \omega\) are equal to \(0\) for all but finitely many \(\xi \in \text{Ord}\). Then 
\begin{align*}\alpha \oplus \beta &= \sum_{\xi\in \text{Ord}} \omega^{\xi}\cdot (m_\xi + n_\xi)\end{align*}
\end{defn}

In other words one adds the Cantor normal forms of \(\alpha\) and \(\beta\) as polynomials.

The fact that natural addition is commutative and associative follows easily from the corresponding facts for addition of natural numbers. We mostly need the following triviality:
\begin{lma}\label{OrderSum}
If \(\alpha_0< \alpha_1\) and \(\beta\) are ordinals, then \(\alpha_0 \oplus \beta < \alpha_1 \oplus \beta\).
\end{lma}

\begin{defn}
If \(U\in \textnormal{Un}\) and \(W_*\in \textnormal{Un}^{M_U}\) then the {\it natural sum of \(U\) and \(W_*\)}, denoted \(U\oplus W_*\), is the uniform ultrafilter derived from \(j^{M_U}_{W_*}\circ j_U\) using \([\text{id}]^{M_U}_{W_*}\oplus j^{M_U}_{W_*}([\text{id}]_U)\). 
\end{defn}

The next lemma says that the natural sum of ultrafilters is Rudin-Keisler equivalent to the usual sum of ultrafilters.

\begin{lma}\label{UltrafilterSum} 
For any \(U\in \textnormal{Un}\) and \(W_*\in \textnormal{Un}^{M_U}\), \(M_{U\oplus W_*} = M^{M_U}_{W_*}\) and \(j_{U\oplus W_*} = j^{M_U}_{W_*}\circ j_U\).
\end{lma}

Natural sums also interact quite simply with the Ketonen order:

\begin{lma}\label{SumSeed}
Suppose \(U\in \textnormal{Un}\). Suppose \(W_0,W_1\in \textnormal{Un}^{M_U}\). Then \(W_0\sE^{M_U} W_1\) if and only if \(U\oplus W_0 \sE U\oplus W_1\).
\end{lma}

\begin{thm}[Reciprocity Theorem]\label{Reciprocity}
Assume the Ketonen order is linear. Then for any uniform countably complete ultrafilters \(U\) and \(W\), \[U\oplus \tr{U}{W} = W\oplus \tr{W}{U}\]
\begin{proof}
Assume towards a contradiction that \(U\oplus \tr{U}{W} \sE W \oplus \tr{W}{U}\). 

By \cref{SeedComparison}, there is a semicomparison \[(k,i) : (M_{U\oplus \tr{U}{W}},M_{W \oplus \tr{W}{U}})\to N\] of \((j_{U\oplus \tr{U}{W}}, j_{W \oplus \tr{W}{U}})\) such that \[k\left([\text{id}]_{U\oplus \tr{U}{W}}\right) < i\left([\text{id}]_{W \oplus \tr{W}{U}}\right)\]
In other words,
\begin{equation}\label{WrongOrder}k\left([\text{id}]^{M_U}_{\tr{U}{W}}\oplus j^{M_U}_{\tr{U}{W}}([\text{id}]_U)\right) < i\left([\text{id}]^{M_W}_{\tr{W}{U}}\oplus j^{M_W}_{\tr{W}{U}}([\text{id}]_W)\right)\end{equation}
\begin{clm}\label{MinimalityTranslates}
\(i\left([\textnormal{id}]^{M_W}_{\tr{W}{U}}\right) \leq k\left(j^{M_U}_{\tr{U}{W}}([\textnormal{id}]_U)\right) \)
\end{clm}
\begin{clm}\label{MinimalityDefinable}
\(i\left(j^{M_W}_{\tr{W}{U}}([\textnormal{id}]_W)\right) \leq k\left([\textnormal{id}]^{M_U}_{\tr{U}{W}}\right)\)
\end{clm}
Using \cref{OrderSum}, these two claims contradict \cref{WrongOrder}, so the assumption that \(U\oplus \tr{U}{W} \sE W \oplus \tr{W}{U}\) was false.
\begin{proof}[Proof of \cref{MinimalityTranslates}]
Let \(U_*\) be the \(M_W\)-ultrafilter derived from \(i\circ j^{M_W}_{\tr{W}{U}}\) using \(k\left(j^{M_U}_{\tr{U}{W}}([\textnormal{id}]_U)\right)\). Let \(h : M^{M_W}_{U_*}\to N\) be the factor embedding. Note that \(W^-(U_*) = U\): this is an easy calculation using \cref{LimitEmbedding}, noting that there is an elementary embedding \(M_U\to M^{M_W}_{U_*}\) witnessing the hypotheses of \cref{LimitEmbedding}, namely \(h^{-1}\circ k\circ j^{M_U}_{\tr{U}{W}}\).

If \(h([\text{id}]^{M_W}_{U_*}) < i([\text{id}]^{M_W}_{\tr{W}{U}})\), then \(U_*\sE^{M_W} \tr{W}{U}\) by \cref{SeedComparison}, contrary to the minimality of \(\tr{W}{U}\). Thus \[i([\text{id}]^{M_W}_{\tr{W}{U}})\leq  h([\text{id}]^{M_W}_{U_*}) = k(j^{M_U}_{\tr{U}{W}}([\textnormal{id}]_U))\] as desired.
\end{proof}
\begin{proof}[Proof of \cref{MinimalityDefinable}]
Let \(h : M_W\to M^{M_U}_{\tr{U}{W}}\) be the elementary embedding given by \cref{LimitEmbedding}. Then \(k\circ h([\text{id}]_W) = k([\textnormal{id}]^{M_U}_{\tr{U}{W}})\). Since \(i\circ  j^{M_W}_{\tr{W}{U}}\) and \(k\circ h\) are elementary embeddings \(M_W\to N\), and since \(i\circ  j^{M_W}_{\tr{W}{U}}\) is definable from parameters over \(M_W\), \[i\circ j^{M_W}_{\tr{W}{U}}\restriction \text{Ord} \leq k\circ h\restriction \text{Ord}\] by \cref{MinDefEmb}. In particular, \[i\circ j^{M_W}_{\tr{W}{U}}([\text{id}]_W)\leq k\circ h([\text{id}]_W) = k([\textnormal{id}]^{M_U}_{\tr{U}{W}})\qedhere\]
\end{proof}
Similarly we cannot have \(W \oplus \tr{W}{U}\sE U\oplus \tr{U}{W}\). By the linearity of the seed order, \(U\oplus \tr{U}{W} = W\oplus \tr{W}{U}\), finishing the proof of the Reciprocity Theorem.
\end{proof}
\end{thm}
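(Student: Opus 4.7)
The plan is to assume, toward a contradiction, that the two sums differ, and (using linearity of the Ketonen order) reduce to the case \(U \oplus \tr{U}{W} \sE W \oplus \tr{W}{U}\). By \cref{SeedComparison} this would yield a semicomparison \((k,i) : (M_{U \oplus \tr{U}{W}}, M_{W \oplus \tr{W}{U}}) \to N\) with a strict inequality of seeds, which, after unpacking the definition of \(\oplus\), becomes
\[ k\left([\text{id}]^{M_U}_{\tr{U}{W}} \oplus j^{M_U}_{\tr{U}{W}}([\text{id}]_U)\right) < i\left([\text{id}]^{M_W}_{\tr{W}{U}} \oplus j^{M_W}_{\tr{W}{U}}([\text{id}]_W)\right). \]
My goal is then to refute this via \cref{OrderSum} by establishing two ``crossed'' inequalities at the level of summands and invoking commutativity of the natural sum.

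The first crossed inequality I would establish is \(i([\text{id}]^{M_W}_{\tr{W}{U}}) \leq k(j^{M_U}_{\tr{U}{W}}([\text{id}]_U))\), which should follow from the minimality of \(\tr{W}{U}\). To this end I would introduce the \(M_W\)-ultrafilter \(U_*\) derived from \(i \circ j^{M_W}_{\tr{W}{U}}\) using the point \(k(j^{M_U}_{\tr{U}{W}}([\text{id}]_U))\), with factor map \(h : M^{M_W}_{U_*} \to N\). A diagram chase via \cref{LimitEmbedding} should show \(W^-(U_*) = U\), with witnessing embedding \(h^{-1} \circ k \circ j^{M_U}_{\tr{U}{W}} : M_U \to M^{M_W}_{U_*}\). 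If \(h([\text{id}]^{M_W}_{U_*}) < i([\text{id}]^{M_W}_{\tr{W}{U}})\), then \cref{SeedComparison} would yield \(U_* \sE^{M_W} \tr{W}{U}\), contradicting the defining minimality of \(\tr{W}{U}\).

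The second crossed inequality, \(i(j^{M_W}_{\tr{W}{U}}([\text{id}]_W)) \leq k([\text{id}]^{M_U}_{\tr{U}{W}})\), I would derive from \cref{MinDefEmb} applied to the embeddings \(i \circ j^{M_W}_{\tr{W}{U}}\) and \(k \circ h'\) from \(M_W\) to \(N\), where \(h' : M_W \to M^{M_U}_{\tr{U}{W}}\) is the embedding supplied by \cref{LimitEmbedding} using \(U^-(\tr{U}{W}) = W\); in particular \(h'([\text{id}]_W) = [\text{id}]^{M_U}_{\tr{U}{W}}\). The map \(i \circ j^{M_W}_{\tr{W}{U}}\) is definable over \(M_W\) from parameters because it is the composition of two internal ultrapower embeddings of \(M_W\), hence itself an internal ultrapower of \(M_W\). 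Applying \cref{MinDefEmb} at \(\alpha = [\text{id}]_W\) then yields the desired bound.

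Once both crossed inequalities are in place, commutativity of \(\oplus\) together with two applications of \cref{OrderSum} will give
\[ i\left([\text{id}]^{M_W}_{\tr{W}{U}} \oplus j^{M_W}_{\tr{W}{U}}([\text{id}]_W)\right) \leq k\left([\text{id}]^{M_U}_{\tr{U}{W}} \oplus j^{M_U}_{\tr{U}{W}}([\text{id}]_U)\right), \]
contradicting the strict inequality above. The reverse strict case \(W \oplus \tr{W}{U} \sE U \oplus \tr{U}{W}\) is ruled out by the same argument with the roles of \(U\) and \(W\) interchanged, and linearity of the Ketonen order then forces equality. I expect the main obstacle to be the verification that \(W^-(U_*) = U\) via \cref{LimitEmbedding}, as it requires careful tracking of how the factor map \(h\) interacts with both the semicomparison and the embeddings produced by the limit characterization; the application of \cref{MinDefEmb} is essentially a one-liner once one notices that composing two internal ultrapower embeddings of \(M_W\) yields an internal ultrapower of \(M_W\).
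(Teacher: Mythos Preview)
Your proposal is correct and follows the paper's proof essentially step for step: the same semicomparison from \cref{SeedComparison}, the same two crossed inequalities proved respectively via the minimality of \(\tr{W}{U}\) (deriving \(U_*\) and using the factor map) and via \cref{MinDefEmb} applied to \(i\circ j^{M_W}_{\tr{W}{U}}\) versus \(k\circ h'\), followed by \cref{OrderSum} and commutativity of \(\oplus\) to reach the contradiction. The only minor imprecision is your description of \(i\circ j^{M_W}_{\tr{W}{U}}\) as a composition of two internal ultrapower embeddings ``of \(M_W\)''---strictly, \(i\) is internal to \(M^{M_W}_{\tr{W}{U}}\), but the composition is indeed an internal ultrapower embedding of \(M_W\), which is what \cref{MinDefEmb} needs.
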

\begin{proof}[Proof of \cref{Linearity}]
That (1) implies (2) is immediate.

We now prove (2) implies (1). By \cref{Reciprocity}, the linearity of the Ketonen order implies that for any uniform countably complete ultrafilters \(U\) and \(W\), \(U\oplus \tr{U}{W} = W\oplus \tr{W}{U}\). Thus \((j_{\tr U W}^{M_U},j_{\tr W U}^{M_W})\) is  a comparison of \((j_U,j_W)\). This implies that UA holds.
\end{proof}
\section{Ultrafilter Determinacy}
In this last section, we define a generalization of the Lipschitz order and raise the question of whether the Ultrapower Axiom is a long determinacy principle.

\begin{defn}
If \(\delta\) is an ordinal, a function \(f : P(\delta)\to P(\delta)\) is said to be {\it strongly Lipschitz} if for all \(X\subseteq \delta\), \(\alpha\in f(X)\) if and only if \(\alpha\in f(X\cap \alpha)\)\end{defn}

In other words, whether \(\alpha\) belongs to \(f(X)\) depends only on \(X\cap \alpha\). Recall that \(P(\text{Ord})\) denotes the class of sets of ordinals.

\begin{defn}
If \(\delta\) is an ordinal, the {\it Lipschitz order on \(P(P(\delta))\)} is defined on sets \(A,B\subseteq P(\delta)\) by setting \(A\sLi B\) if there is a strongly Lipschitz function \(f :P(\delta)\to P(\delta)\) such that \(A = f^{-1}[B]\). 
\end{defn}

The Lipschitz order is really a preorder.
Note that if \(U\) is an ultrafilter on \(\delta\), then \(U\subseteq P(\delta)\). The Lipschitz order on \(P(P(\delta))\) therefore induces a preorder on the set of ultrafilters on \(\delta\). 
\begin{lma}
For any ordinal \(\delta\), the Lipschitz order on \(P(P(\delta))\) restricts to a strict partial order on ultrafilters on \(\delta\).
\begin{proof}
Transitivity follows from the closure of strongly Lipschitz functions under composition. 

To show that \(\sLi\) is irreflexive, we use the following fact: if \(f : P(\delta)\to P(\delta)\) is a strongly Lipschitz function, then there is some \(A\subseteq \delta\) such that \(f(A) = \delta\setminus A\). (One defines such a set \(A\) by transfinite induction, putting \(\xi\in A\) if \(\xi\notin f(A\cap \xi)\).) Assume towards a contradiction that \(U\) is an ultrafilter on \(\delta\) and \(U\sLi U\). Let \(f : P(\delta)\to P(\delta)\) be a strongly Lipschitz function such that \(f^{-1}[U] = U\). Fix \(A\) such that \(f(A) = \delta\setminus A\). Then 
\begin{align*}
A\in U&\iff \delta\setminus A\notin U\\
&\iff f(A)\notin U\\
&\iff A\notin f^{-1}[U]\\
&\iff A\notin U
\end{align*}
This is a contradiction.
\end{proof}
\end{lma}

We would like to piece together the Lipschitz orders on \(P(P(\delta))\) for various ordinals \(\delta\) into a single order on all uniform ultrafilters. Since uniform ultrafilters on ordinals below \(\delta\) belong to \(P(P(\delta))\), it is tempting to define the Lipschitz order on uniform ultrafilters as the restriction of the Lipschitz order to uniform ultrafilters, but this would cause some minor problems when \(\delta\) is a successor ordinal. Instead w do the following:

\begin{defn}
The {\it Lipschitz order on \(\Un\)} is defined on countably complete uniform ultrafilters \(U\) and \(W\) by setting \(U\sLi W\) if \(U' \sLi W'\) in the Lipschitz order on \(P(P(\delta))\) where \(\delta = \max \{\textsc{sp}(U),\textsc{sp}(W)\}\) and \(U'\) and \(W'\) are the ultrafilters on \(\delta\) given by \(U\) and \(W\).
\end{defn}

\begin{prp}\label{LKet}
The Lipschitz order  on \(\Un\) extends the Ketonen order.
\begin{proof}
Suppose \(U\) and \(W\) are countably complete uniform ultrafilters with \(U\sE W\). We will show that \(U\sLi W\) in the Lipschitz order on \(\Un\). 

Let \(\delta = \max \{\textsc{sp}(U),\textsc{sp}(W)\} = \textsc{sp}(W)\) (by \cref{SpaceLemma1}) and let \(U'\) be the ultrafilter on \(\delta\) induced by \(U\). Fix a sequence of countably complete ultrafilters \(U_\alpha\) on \(\alpha\), defined for \(\alpha < \delta\), such that for all \(X\subseteq \textsc{sp}(U)\), \(X\in U\) if and only if \(X\cap \alpha\in U_\alpha\) for \(W\)-almost all \(\alpha < \delta\). Then for all \(X\subseteq \delta\), \(X\in U'\) if and only if \(X\cap \alpha\in U_\alpha\) for \(W\)-almost all \(\alpha < \delta\). 

Let \(f : P(\delta)\to P(\delta)\) be defined on \(X\subseteq \delta\) by \(f(X) = \{\alpha < \delta : X\cap \alpha\in U_\alpha\}\). Then \(f\) is strongly Lipschitz and  for any \(X\subseteq \delta\),
\begin{align*}
X\in U'&\iff \{\alpha < \delta : X\cap \alpha\in U_\alpha\}\in W\\
&\iff f(X)\in W\\
&\iff X\in f^{-1}[W]
\end{align*}
Therefore \(U'\sLi W\) in the Lipschitz order on \(P(P(\delta))\). It follows that \(U\sLi W\) in the Lipschitz order on \(\Un\).
\end{proof}
\end{prp}

A consequence of the proof of \cref{LKet} is that the Lipschitz order and the Mitchell order coincide on normal ultrafilters. 

Notice that for \(A,B\in P(P(\delta))\), \(A\sLi B\) holds if and only if Player I has a winning strategy in the following game of length \(2\cdot \delta\): I and II alternate to play out the characteristic functions of sets \(x,y\subseteq \delta\) with I playing at limit stages, and I wins if \(x\in A\) if and only if \(y\in B\). 
\begin{defn}
{\it Ultrafilter Determinacy} is the statement that the Lipschitz order on \(\Un\) is linear.
\end{defn}

\begin{thm}[UA]
Ultrafilter Determinacy holds, and in fact, the Lipschitz order on \(\Un\) is equal to the seed order.
\begin{proof}
The Lipschitz order extends the seed order by \cref{KSeed} and \cref{LKet}. Since the seed order is linear, the Lipschitz order is equal to the seed order.
\end{proof}
\end{thm}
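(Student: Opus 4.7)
The plan is to bolt together the partial-order containments already established with the linearity of the seed order under UA. This should require essentially no new combinatorial work; the theorem is a short corollary of \cref{KSeed}, \cref{LKet}, and the linearity of \(\swo\) quoted in the introduction.

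First, I would chain the two extensions in hand. By \cref{KSeed}, the Ketonen order extends the seed order, and by \cref{LKet}, the Lipschitz order on \(\Un\) extends the Ketonen order. Composing, \(\sLi\) extends \(\swo\): for countably complete uniform ultrafilters \(U,W\), if \(U\swo W\) then \(U\sLi W\).

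Second, I would invoke the linearity of \(\swo\) under UA (the first theorem of the introduction, from \cite{UA}). Combined with the previous step, for any distinct \(U,W\in \Un\) we have \(U\sLi W\) or \(W\sLi U\), so the Lipschitz order on \(\Un\) is linear; this is exactly Ultrafilter Determinacy.

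Third, to upgrade "extends" to "equals", I would use that \(\sLi\) is a strict partial order (so irreflexive and transitive on ultrafilters, by the lemma proved just before). Assume for contradiction that \(U\sLi W\) but not \(U\swo W\). Then \(U\ne W\); by linearity of \(\swo\), \(W\swo U\), and so by the first step \(W\sLi U\). Transitivity of \(\sLi\) gives \(U\sLi U\), contradicting irreflexivity. Hence \(\sLi\) coincides with \(\swo\) on \(\Un\).

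There is no real obstacle here; the only conceivable subtlety is keeping straight which order is defined on \(P(P(\delta))\) and which on \(\Un\), and this is already handled by the definition of \(\sLi\) on \(\Un\) (choosing \(\delta = \max\{\textsc{sp}(U),\textsc{sp}(W)\}\)) together with \cref{SpaceLemma1}, both of which are used in the proof of \cref{LKet}.
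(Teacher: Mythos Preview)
Your proposal is correct and follows the same approach as the paper: chain \cref{KSeed} and \cref{LKet} to see that \(\sLi\) extends \(\swo\), then use linearity of \(\swo\) under UA. Your third step makes explicit the standard fact (left implicit in the paper) that a strict partial order extending a strict linear order must coincide with it.
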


It is not clear whether one can prove the wellfoundedness of the Lipschitz order on \(\Un\) in ZFC. On the other hand, using the Martin's proof of the wellfoundedness of the Wadge order, one can establish the following fact:

\begin{prp}[AD + DC]
The Lipschitz order on \(\Un\) is wellfounded.\qed
\end{prp}

\begin{qst}
Does Ultrafilter Determinacy imply the Ultrapower Axiom?
\end{qst}

\begin{qst}
Does the Axiom of Determinacy imply Ultrafilter Determinacy?
\end{qst}

We do not even know a counterexample to the semilinearity of the generalized Lipschitz order assuming AD.
\begin{defn}
Suppose \(\delta\) is an ordinal. A function \(f : P(\delta)\to P(\delta)\) is Lipschitz if for all \(\alpha < \delta\) and all \(A\subseteq \delta\), \(\alpha\in f(A)\) if and only if \(\alpha\in f(A\cap (\alpha+1))\).

The {\it nonstrict Lipschitz order on \(P(P(\delta))\)} is defined on \(A,B\subseteq P(\delta)\) by setting \(A\Li B\)  if there is a Lipschitz function \(f : P(\delta)\to P(\delta)\) such that \(A = f^{-1}[B]\).
\end{defn}

Under \(\text{AD}_\mathbb R\), the Lipschitz order is semilinear on \(P(P(\delta))\) for any \(\delta < \omega_1\). The following is therefore a natural question:

\begin{qst}[AD]
Are there \(A,B\subseteq P(\omega_1)\) with \(A\not \sLi B\) and \(B\not \Li P(\omega_1)\setminus A\)? 
\end{qst}

\bibliography{Bibliography}{}
\bibliographystyle{unsrt}

\end{document}